\theoremstyle{plain}
\newtheorem{theorem}{Theorem}
\newtheorem{proposition}[theorem]{Proposition}
\newtheorem{corollary}[theorem]{Corollary}
\newtheorem{definition}[theorem]{Definition}
\theoremstyle{definition}
\newtheorem{remark}[theorem]{Remark}
\numberwithin{equation}{section}
\numberwithin{theorem}{section}
\newcommand{\eps}{\varepsilon}
\newcommand{\ud}[0]{\,\mathrm{d}}
\newcommand{\vertiii}[1]{{\left\vert\kern-0.25ex\left\vert\kern-0.25ex\left\vert #1
    \right\vert\kern-0.25ex\right\vert\kern-0.25ex\right\vert}}
\begin{document}

\title[On strongly orthogonal martingales in UMD Banach spaces]
{On strongly orthogonal martingales\\ in UMD Banach spaces}

\author{Ivan Yaroslavtsev}
\address{Delft Institute of Applied Mathematics\\
Delft University of Technology \\ P.O. Box 5031\\ 2600 GA Delft\\The
Netherlands}
\email{I.S.Yaroslavtsev@tudelft.nl}

\begin{abstract}
In the present paper we introduce the notion of strongly orthogonal martingales. Moreover, we show that for any UMD Banach space $X$ and for any $X$-valued strongly orthogonal martingales $M$ and $N$ such that $N$ is weakly differentially subordinate to $M$ one has that for any $1<p<\infty$
\[
\mathbb E \|N_t\|^p \leq \chi_{p, X}^p \mathbb E \|M_t\|^p,\;\;\; t\geq 0,
\]
with the sharp constant $\chi_{p, X}$ being the norm of a
decoupling-type martingale transform and being within the range
\[
\max\Bigl\{\sqrt{\beta_{p, X}}, \sqrt{\hbar_{p,X}}\Bigr\} \leq \max\{\beta_{p, X}^{\gamma,+}, \beta_{p, X}^{\gamma, -}\} \leq \chi_{p, X} \leq \min\{\beta_{p, X}, \hbar_{p,X}\},
\]
where $\beta_{p, X}$ is the UMD$_p$ constant of $X$, $\hbar_{p, X}$ is the norm of the Hilbert transform on $L^p(\mathbb R; X)$, and $\beta_{p, X}^{\gamma,+}$ and $ \beta_{p, X}^{\gamma, -}$ are the Gaussian decoupling constants.
\end{abstract}

\keywords{strongly orthogonal martingales, weak differential
subordination, UMD, sharp estimates, decoupling constant,
martingale transform, Hilbert transform, diagonally
plurisubharmonic function}

\subjclass[2010]{60G44, 60H05 Secondary: 60B11, 32U05}

\maketitle

\section{Introduction}

Weak differential subordination of Banach space-valued martingales was recently discovered in the  papers \cite{Y17FourUMD,Y17MartDec,Y17UMD^A,OY18} as a natural extension of differential subordination in the sense of Burkholder and Wang (see \cite{Wang,Burk84}) to infinite dimensions, and it has the following form: for a given Banach space $X$ an $X$-valued martingale $N$ is {\em weakly differentially subordinate} to an $X$-valued local martingale $M$ if a.s.\ 
\[
|\langle N_0, x^*\rangle| \leq |\langle M_0, x^*\rangle| \;\; \text{and}
\] 
\[
[\langle N, x^*\rangle]_t - [\langle N, x^*\rangle]_s \leq [\langle M, x^*\rangle]_t - [\langle M, x^*\rangle]_s,\;\;\; 0\leq s\leq t,
\]
for any $x^*\in X^*$, where $[\,\cdot\,]$ is a {\em quadratic variation} of a martingale (see Subsection~\ref{subsec:qvandpdm}). 

Weak differential subordination, especially if $X$ satisfies {\em the UMD property} (see Subsection \ref{subsec:UMD}), has several applications in Harmonic Analysis. 
On the one hand, $L^p$-bounds for weakly differential subordinated {\em purely discontinuous} martingales imply estimates for $L^p$-norms of {\em L\'evy multipliers}. Namely, it was shown in  \cite{Y17FourUMD} that if $T_m$ is a L\'evy multiplier (i.e.\ a Fourier multiplier generated by a L\'evy measure, see \cite{BB,BBB}), then by using weakly differential subordinated purely discontinuous martingales one gets that for any $1<p<\infty$  the $L^p$-norm of $T_m$ acting on $X$-valued functions is bounded by {\em the UMD constant} $\beta_{p, X}$ (which boundedness characterizes the UMD property, please see Subsection \ref{subsec:UMD}).

On the other hand, various bounds for weakly differential subordinated {\em orthogonal} martingales coincide with the same type of estimates for the {\em Hilbert transform} (see \cite{OY18} by Os\c{e}kowski and the author). Recall that two $X$-valued martingales $M$ and $N$ are orthogonal if a.s.\ for any $x^* \in X^*$
\[
\langle M_0, x^*\rangle\cdot\langle N_0, x^*\rangle=0 \;\; \text{and}\;\;
[\langle M, x^*\rangle,\langle N, x^*\rangle]_t=0,\;\;\; t\geq 0,
\]
where $[ \;\cdot\;, \,\cdot\;]$ is a {\em covariation} of two martingales (see Subsection \ref{subsec:qvandpdm}).  
In particular, it was shown in \cite{OY18} that for any UMD Banach space $X$ and any $X$-valued orthogonal martingales $M$ and $N$ such that $N$ is weakly differentially subordinate to $M$ one has that for every $1<p<\infty$
\[
\mathbb E \|N_t\|^p \leq \hbar_{p, X}^p \mathbb E \|M_t\|^p,\;\;\; t\geq 0,
\]
where the sharp constant $\hbar_{p, X}$ is the norm of the Hilbert transform on $L^p(\mathbb R; X)$.

The goal of the present paper is to present sharp $L^p$ estimates for {\em strongly orthogonal} weakly differentially subordinated martingales. We call two $X$-valued martingales $M$ and $N$ strongly orthogonal if a.s.\ for any $x^*, y^* \in X^*$
\[
\langle M_0, x^*\rangle\cdot\langle N_0, y^*\rangle=0 \;\; \text{and}\;\;
[\langle M, x^*\rangle,\langle N, y^*\rangle]_t=0,\;\;\; t\geq 0.
\]
A classical example of strongly orthogonal martingales are stochastic integrals $\int \Phi\ud W$ and $\int \Phi\ud \widetilde W$, where $\Phi$ is $X$-valued elementary predictable, and $W$ and $\widetilde W$ are independent Brownian motions.
In the present paper we prove that for any strongly orthogonal weakly differentially subordinated martingales $M$ and $N$
\begin{equation}\label{eq:INTROSOWDSLpbounds}
\mathbb E \|N_t\|^p \leq \chi^p \mathbb E \|M_t\|^p,\;\;\; t\geq 0,\;\;1<p<\infty,
\end{equation}
where the sharp constant $\chi = \chi_{p, X}$ is within the range
\begin{equation}\label{eq:INTROchiestimates}
\max\{\sqrt{\beta_{p,X}}, \sqrt{\hbar_{p, X}}\} \leq \chi_{p, X} \leq \min\{\beta_{p, X}, \hbar_{p, X}\}.
\end{equation}

The main technique we used in order to prove \eqref{eq:INTROSOWDSLpbounds} is the {\em Bellman function method}. More specifically, we show that the following are equivalent
\begin{enumerate}[(A)]
\item \eqref{eq:INTROSOWDSLpbounds} holds for a constant $\chi>0$,
\item there exists $U^{SO}:X+iX \to \mathbb R$ such that $U^{SO}(x) \geq 0$ for any $x\in X$, $z \mapsto U^{SO}(x_0 + iy_0 + zx)$ in subharmonic in $z\in \mathbb C$ for any $x_0, y_0, x\in X$, and
\[
U^{SO}(x+iy) \leq \chi^p \|x\|^p - \|y\|^p,\;\;\;x,y \in X.
\]
\end{enumerate}
Notice that this method is not new while working with martingales with values in UMD Banach space. Namely, in \cite{Y17FourUMD} there was applied the {\em Burkholder function} $U:X \times X \to \mathbb R$ which first appeared in the paper \cite{Burk86} by Burkholder, and in \cite{OY18} there was used a {\em plurisubhirmonic function} $U_{\mathcal H}:X+iX\to \mathbb R$which first was constructed in the paper \cite{HKV03} by Hollenbeck, Kalton, and Verbitsky. The novelty of the present paper is in minimizing the necessary properties of the Bellman function. Namely, both $-U$ and $U_{\mathcal H}$ satisfy the property ${\rm (B)}$ outlined above (which makes the upper bound of \eqref{eq:INTROchiestimates} elementary).

In order to show the lower bounds of \eqref{eq:INTROchiestimates} and in order to characterize the least admissible cosntant $\chi_{p, X}$ we will need the example presented above. It turned out in Section \ref{sec:chipX} and \ref{sec:WDSofSOM} that the sharp constant $\chi_{p, X}$ is the smallest constant $\chi>0$ such that for any independent Brownian motions $W$ and $\widetilde W$ and for any elementary predictable $X$-valued $\Phi$ one has that
\[
\mathbb E \Bigl\|\int_0^{\infty}\Phi \ud \widetilde W \Bigr\|^p \leq \chi^p \mathbb E \Bigl\|\int_0^{\infty}\Phi \ud W \Bigr\|^p.
\]
Thus the desires lower bound of \eqref{eq:INTROchiestimates} follows from the well-known decoupling-type inequalities of Garling, see \cite{Gar85}.

\smallskip

Notice that if $X = \mathbb R$, then $\chi_{p, X} = \hbar_{p, X}$ (see Remark \ref{rem:-UandUHarediagplsh}). Nevertheless, it remains open whether this equality holds for a general UMD Banach space $X$. Moreover, if this is the case, then it proves a celebrated open problem about linear dependence of the constants $\beta_{p, X}$ and $\hbar_{p,X}$, see \cite[p.\ 48]{Bour84} and \cite{HNVW1,GM-SS,Y17FourUMD,OY18} (so far only a square dependence is known, see \eqref{eq:sqrtbetaleqhleqbeta^2}).

\medskip

\emph{Acknowledgment} --The author would like to thank Adam Os\c{e}kowski and Mark Veraar
for helpful comments. The author thanks Stefan Geiss for fruitful
discussions and for being the host while author's stay at
Jyv\"askyl\"a University where the present paper was written.

\section{Preliminaries}

Throughout the paper all Banach spaces are assumed to be over the scalar field $\mathbb R$ unless stated otherwise. We also assume that any filtration satisfies the usual conditions. In particular, any filtration is right-continuous, and thus all the local martingales exploited in the article have {\em c\`adl\`ag} versions (i.e.\ versions which are right continuous with left limits, see \cite{VerPhD,Y17FourUMD}). Furthermore, for any Banach space $X$, for any c\`adl\`ag process $A:\mathbb R_+ \times \Omega \to X$, and  for any stopping time $\tau$ we define
\[
 \Delta A_{\tau} := \lim_{\eps\to 0} (A_{\tau} - A_{(\tau-\eps)\vee 0}).
\]

\subsection{UMD Banach spaces}\label{subsec:UMD}

A Banach space $X$ is called {\it UMD} if for some (equivalently,
for all) $p \in (1,\infty)$ there exists a constant $\beta>0$ such
that for every $N \geq 1$, every martingale difference sequence
$(d_n)^N_{n=1}$ in $L^p(\Omega; X)$, and every $\{-1,1\}$-valued
sequence $(\varepsilon_n)^N_{n=1}$ we have
\begin{equation*}\label{eq:defofUMDconstant}
 \Bigl(\mathbb E \Bigl\| \sum^N_{n=1} \varepsilon_n d_n\Bigr\|^p\Bigr )^{\frac
1p}
\leq \beta \Bigl(\mathbb E \Bigl \| \sum^N_{n=1}d_n\Bigr\|^p\Bigr )^{\frac 1p}.
\end{equation*}
The least admissible constant $\beta$ is denoted by $\beta_{p,X}$
and is called the {\it UMD$_p$~constant} or, in the case if the
value of $p$ is understood, the {\em UMD constant} of $X$. It is
well-known that UMD spaces obtain a large number of useful
properties, such as being reflexive. Examples of UMD spaces
include all finite dimensional spaces and the reflexive range of
$L^q$-, Besov, Sobolev, Schatten class, and Musielak--Orlicz
spaces. Example of spaces without the UMD property include all
nonreflexive Banach spaces, e.g.\ $L^1(0,1)$ or $C([0,1])$. We
refer to \cite{Burk01,HNVW1,Rubio86,Pis16} for details.

\subsection{Quadratic variation}\label{subsec:qvandpdm}
Let $(\Omega, \mathcal F, \mathbb P)$ be a probability space with
a filtration $\mathbb F = (\mathcal F_t)_{t\geq 0}$ that satisfies
the usual conditions. Let $M:\mathbb R_+ \times \Omega \to \mathbb
R$ be a local martingale. We define a {\em quadratic variation} of
$M$ in the following way:
\begin{equation}\label{eq:defquadvar}
 [M]_t  := |M_0|^2 + \mathbb P-\lim_{{\rm mesh}\to 0}\sum_{n=1}^N \bigl|M(t_n)-M(t_{n-1})\bigr|^2,
\end{equation}
where the limit in probability is taken over partitions $0= t_0 <
\ldots < t_N = t$. Note that $[M]$ exists and is nondecreasing
a.s. The reader can find more on quadratic variations in
\cite{Kal,Prot,DM82}. For any martingales $M, N:\mathbb R_+ \times
\Omega \to \mathbb R$ we can define a {\em covariation}
$[M,N]:\mathbb R_+ \times \Omega \to \mathbb R$ as $[M,N] :=
\frac{1}{4}([M+N]-[M-N])$. Since $M$ and $N$ have c\`adl\`ag
versions, $[M,N]$ has a c\`adl\`ag version as well (see e.g.
\cite[Theorem I.4.47]{JS}).

\smallskip

A local martingale $M:\mathbb R_+ \times\Omega \to \mathbb R$ is
called {\em purely discontinuous} if $[M]$ is a.s.\ pure jump,
i.e.\ $[M]_t = \sum_{0\leq s \leq t}\Delta [M]_s$ a.s. Let $X$ be
a Banach space. Then an $X$-valued local martingale $M:\mathbb R_+
\times \Omega \to X$ is called {\em purely discontinuous} if
$\langle M, x^*\rangle$ is purely discontinuous for any $x^* \in
X^*$. Note that if $X$ is UMD, then any local martingale $M$ has a
unique decomposition into a sum of a continuous  local martingale
$M^c$ with $M^c_0=0$ and a purely discontinuous local martingale
$M^d$ (see \cite{Y17GMY}). We refer to
\cite{Kal,JS,Y17FourUMD,Y17MartDec,Y17GMY} for details on purely
discontinuous martingales.

\subsection{Weak differential subordination of martingales}

Let $X$ be a Banach space.
 Let $M,N:\mathbb R_+ \times \Omega \to X$ be local martingales. Then we say
that $N$ is {\em weakly differentially subordinate} to $M$ (we will denote this by $N \stackrel{w} \ll M$) if for each $x^* \in
X^*$ one has that $[\langle M,x^*\rangle]-[\langle N,x^*\rangle]$ is an a.s.\
nondecreasing function and $|\langle N_0,x^*\rangle|\leq |\langle
M_0,x^*\rangle|$~a.s.

The definition above first appeared in \cite{Y17FourUMD} as a natural
extension of differential subordination of real-valued
martingales. Later in \cite{Y17MartDec} there were obtained the
first $L^p$-estimated for weakly differentially subordinated
martingales, which have been significantly improved in \cite{OY18}
in the continuous-time case.

\subsection{Orthogonal martingales}

Let $M$ and $N$ be local martingales taking values in a given Banach
space $X$. Then $M$ and $N$ are said to be {\em orthogonal}, if
$\langle M_0, x^*\rangle\cdot \langle N_0, x^*\rangle  =0 $  and
$[\langle M, x^*\rangle,\langle N, x^*\rangle] = 0$ almost surely
for all functionals $x^* \in X^*$.

\begin{remark}\label{rem:orth+wds}
Assume that $M$ and $N$ are local martingales taking values in some
Banach space $X$. If $M$ and $N$ are orthogonal and $N$ is weakly
differentially subordinate to $M$, then $N_0=0$ almost surely
(which follows immediately from the above definitions, see \cite{OY18}).  Moreover,
under these assumptions, $N$ must have continuous trajectories
with probability $1$. Indeed, in such a case for any fixed $x^*\in
X^*$ the real-valued local martingales $\langle M, x^*\rangle$ and 
$\langle N, x^*\rangle$ are orthogonal and we have $\langle N,
x^*\rangle \ll \langle M, x^*\rangle$. Therefore, $\langle N,
x^*\rangle$ has a continuous version for each $x^*\in X^*$ by
\cite[Lemma 3.1]{Os09a} (see also \cite[Lemma 1]{BanWang96}),
which in turn implies that $N$ is continuous since any $X$-valued local
martingale has a c\`adl\`ag version.
\end{remark}

\subsection{Stochastic integration}
For given Banach spaces $X$ and $Y$, the symbol $\mathcal{L}(X,Y)$
will denote the classes of all linear operators from $X$ to $Y$.
We will also use the notation $\mathcal{L}(X)=\mathcal{L}(X,X)$.
Suppose that $H$ is a Hilbert space. For each $h\in H$ and $x\in
X$, we denote by $h\otimes x$ the associated linear operator given
by $g\mapsto \langle g, h\rangle x$, $g\in H$. The process $\Phi:
\mathbb R_+ \times \Omega \to \mathcal L(H,X)$ is called
\textit{elementary predictable} with respect to the filtration
$\mathbb F = (\mathcal F_t)_{t \geq 0}$ if it is of the form
\begin{equation*}\label{eq:elempredict}
 \Phi(t,\omega) = \sum_{k=1}^K\sum_{m=1}^M \mathbf 1_{(t_{k-1},t_k]\times B_{mk}}(t,\omega)
\sum_{n=1}^N h_n \otimes x_{kmn},\;\;\; t\geq 0, \omega \in
\Omega.
\end{equation*}
Here $0 \leq t_0 < \ldots < t_K <\infty$ is a finite increasing
sequence of nonegative numbers, the sets $B_{1k},\ldots,B_{Mk}$
belong to $\mathcal F_{t_{k-1}}$ for each  $k = 1,\,2,\,\ldots,
K$, and the vectors $h_1,\ldots,h_N$ are assumed to be orthogonal.
Suppose further that $M$ is an adapted local martingale taking
values in $H$. Then the {\em stochastic integral} $ \int \Phi \ud
M:\mathbb R_+ \times \Omega \to X$ of $\Phi$ with respect to $M$
is defined by the formula
\begin{equation*}
 \int_0^t\Phi \ud M = \sum_{k=1}^K\sum_{m=1}^M \mathbf 1_{B_{mk}}
\sum_{n=1}^N \langle(M(t_k\wedge t)- M(t_{k-1}\wedge t)),
h_n\rangle x_{kmn},\;\; t\geq 0.
\end{equation*}

\begin{remark}\label{rem:stochintgenPhi}
 If both $X$ and $H$ are finite dimensional, then we may assume that $X$ is isomorphic to $\mathbb R^d$, and thus by \cite[Theorem 26.6 and 26.12]{Kal} we can extend the stochastic integration from elementary predictable processes to all the predictable processes $\Phi:\mathbb R_+ \times \Omega \to \mathcal L(H,X)$ with 
 $$
 \mathbb E \Bigl( \sum_{i=1}^n\int_0^{\infty}\|\Phi h_i\|^2 \ud [\langle M, h_i\rangle]_s\Bigr)^{1/2}<\infty,
 $$ 
 where $n$ is the dimension of $H$ and $h_1,\ldots,h_n$ is an orthonormal basis of $H$. In fact, a similar characterization of stochastic integration can be shown for infinite dimensional $X$ and $H$ by using {\em $\gamma$-norms} (see \cite{NVW,Y18BDG,VY2016,Ver}). 
\end{remark}

\subsection{Hilbert transform}\label{subsec:HT}

Let $X$ be a Banach space.
 The {\em Hilbert transform} $\mathcal H_{X}$ is a singular integral
operator that maps a step function $f:\mathbb R\to X$ to the
function
 \begin{equation*}\label{eq:defdefofRHT}
   (\mathcal H_X f)(t):= \frac{1}{\pi}{\textnormal{p.v.}}\int_{\mathbb R}\frac{f(s)}{t-s}\ud s,\;\;\; t\in \mathbb R.
 \end{equation*}
For any $1<p<\infty$ we denote the norm of $\mathcal H_{X}$ on
$L^p(\mathbb R; X)$ by $\hbar_{p, X}$. Note that due to
\cite{Burk83,Bour83} we have that $\hbar_{p, X} <\infty$ if and
only if $X$ is UMD. Moreover, due to \cite{Gar85,Bour83} we have
that for every $1<p<\infty$
 \begin{equation}\label{eq:sqrtbetaleqhleqbeta^2}
   \sqrt{\beta_{p,X}} \leq \hbar_{p, X} \leq \beta_{p, X}^2.
 \end{equation}
 
\begin{remark}
 Recently in \cite{OY18} it was shown that $\hbar_{p, X}$ is the smallest constant $\hbar$ such that there exists a {\em plurisubharmonic function} $U_{\mathcal H}:X+iX\to \mathbb R$ (i.e.\ $z\mapsto U_{\mathcal H}(x_0 + iy_0 + z(x+iy))$ is subharmonic in $z\in \mathbb C$ for any fixed $x_0, y_0, x, y\in X$) such that $U_{\mathcal H}(x)\geq 0$ for any $x\in X$ and $U_{\mathcal H}(x+iy) \leq \hbar^p \|x\|^p - \|y\|^p$ for all $x, y\in X$.
\end{remark}

\subsection{Bellman functions and function approximation}\label{subsec:funcapprox}

Let $X$ be a UMD Banach space, $1<p<\infty$. Throughout the paper
we will use different {\em Bellman functions}, i.e.\ functions
$u:X \times X \to \mathbb R$ which have certain appropriate
properties. Let us outline which functions we will use
\begin{itemize}
\item the Burkholder function $U:X \times X \to\mathbb R$ (see
e.g.\ \cite{HNVW1} and the proof of Corollary
\ref{cor:estimforchipX}),

\item a plurisubharmonic function $U_{\mathcal H}:X+iX \to \mathbb
R$ (see \cite{OY18} and Subsection~\ref{subsec:HT}),

\item a diagonally plurisubharmonic function $U^{SO}:X+iX \to
\mathbb R$ (see Section~\ref{sec:chipX}).
\end{itemize}

For all the Bellman functions named above we may assume that $X$
is finite dimensional and that the function is twice Fr\'echet
differentiable by an approximation argument exploited in
\cite{Y17MartDec,OY18,BO12}. We will not repeat this argument
here, but just shortly remind the reader the main steps.
\begin{itemize}
\item Since $X$ is UMD, it is reflexive, and by the Pettis
measurability theorem \cite[Theorem 1.1.20]{HNVW1} we may assume
that $X$ is separable. Thus $X^*$ is separable as well, and there
exist an increasing sequence $(Y_n)_{n\geq 1}$ of finite
dimensional subspaces of $X^*$ such that $X^* = \overline{\cup_n
Y_n}$. Let $P_n:Y_n \to X^*$ be the injection operator. In the
sequel we will need to show that $\mathbb E \|\eta\|^p \leq c_{p,
X}^p  \mathbb E \|\xi\|^p $ for a certain pair of random variables $\xi, \eta \in
L^p(\Omega; X)$ and a certain constant $c_{p,
X}$. Since $\|P_n^* x\| \nearrow \|x\|$ monotonically as $n\to
\infty$ for any $x\in X$, by the monotone convergence theorem it
is sufficient to show that $\mathbb E \|P_n^*\eta\|^p \leq c_{p,
X}^p  \mathbb E \|P_n^*\xi\|^p $ for any $n\geq 1$. Moreover, in fact we need to show that $\mathbb E \|P_n^*\eta\|^p \leq c_{p,
Y_n^*}^p  \mathbb E \|P_n^*\xi\|^p $  since in our case $c_{p, X}$ equals either $\beta_{p, X}$, $\hbar_{p, X}$, or $\chi_{p, X}$ (see Section~\ref{sec:chipX} for the definition), and since all these constants can be represented as norms of operators having the same operators as their duals, so one has that analogously to \cite[Proposition 4.2.17]{HNVW1} $c_{p, X} = c_{p', X^*}$ (where $p' = p/(p-1)$), and in particular 
\[
 c_{p, Y_n^*} = c_{p', Y_n} \leq  c_{p', X^*} = c_{p, X},
\]
 Thus it is
sufficient to assume that $X$ is finite dimensional since both $P_n^* \xi$ and $P_n^* \eta$ have their values in a finite dimensional space $Y_n^*$.

\item Since $X$ is finite dimensional, for a Bellman function $u$ and
for any $\eps>0$ we can define $u_{\eps} := u *
\eps^{-1}\phi(\eps^{-1} \cdot)$, where $\phi:X \times X \to
\mathbb R_+$ is a $C^{\infty}$ function with a compact domain such
that $\int_{X \times X} \phi(x,y)\ud \lambda(x)\ud \lambda(y) = 1$
(here $\lambda$ is the {\em Lebesque measure} on $X$, see e.g.\
\cite[Remark 3.13]{Y17FourUMD} for the definition). Then
$u_{\eps}$ preserves such properties of $u$ as convexity,
concavity, or subharmonicity on a linear subspace of $X \times X$,
and $u_{\eps} \to u$ as $\eps \to 0$ locally uniformly on $X
\times X$ due to continuity of $u$. Therefore by this approximation argument we may assume that $u$ is $C^{\infty}$.
\end{itemize}

\section{The $\chi_{p,X}$ constant}\label{sec:chipX}

Let $X$ be a Banach space, $1<p<\infty$. We define $\chi_{p,X}\in
[0,\infty]$ to be the least number $\chi>0$ such that for any independent
standard Brownian motions $W, \widetilde W:\mathbb R_+ \times
\Omega \to \mathbb R$ and for any elementary predictable with
respect to the filtration generated by both $W$ and $\widetilde W$
process $\Phi:\mathbb R_+ \times \Omega \to X$ one has that
\[
\mathbb E \Bigl\| \int_0^{\infty} \Phi \ud \widetilde W \Bigr\|^p
\leq \chi^p \mathbb E \Bigl\| \int_0^{\infty} \Phi \ud W
\Bigr\|^p.
\]

\begin{remark}
$\chi_{p,X}$  can be equivalently defined in the following way. Let $(\gamma_n)_{n\geq 1}$ and $(\tilde \gamma_n)_{n\geq 1}$ be sequences of independent standard Gaussian random variables,  $\mathcal F_0 = \{\varnothing, \Omega\}$, and $\mathcal F_n = \sigma(\gamma_1, \tilde \gamma_1,\ldots,\gamma_n, \tilde \gamma_n)$ for $n\geq 1$. Then $\chi_{p, X}$ is the smallest $\chi>0$ such that for any $N\geq 1$ and any elementary step functions $v_0,\ldots,v_{N-1}:\Omega \to X$ with $v_n$ being $\mathcal F_n$-measurable for each $n =0, \ldots, N-1$, one has that
\begin{equation}\label{eq:defofchibygaussians}
\mathbb E\Bigl\|\sum_{n=1}^N\tilde\gamma_n v_{n-1}\Bigr\|^p \leq \chi^p  \mathbb E\Bigl\|\sum_{n=1}^N\gamma_n v_{n-1}\Bigr\|^p.
\end{equation}
Indeed, one can represent the sums $\sum_{n=1}^N\gamma_n v_{n-1}$ and $\sum_{n=1}^N\tilde \gamma_n v_{n-1}$  as stochastic integrals with respect to independent Brownian motions $W$ and $\widetilde W$ by just letting $\gamma_n = W_n-W_{n-1}$ and $\tilde \gamma_n = \widetilde W_n-\widetilde W_{n-1}$. On the other hand, if $W$ and $\widetilde W$ are independent Brownian motions and if $\Phi$ is elementary predictable and defined by 
\begin{equation*}
 \Phi(t,\omega) = \sum_{k=1}^K\sum_{m=1}^M \mathbf 1_{(t_{k-1},t_k]\times B_{mk}}(t,\omega) x_{km},\;\;\; t\geq 0, \omega \in
\Omega.
\end{equation*}
where $0 \leq t_0 < \ldots < t_K <\infty$ is a finite increasing
sequence of nonnegative numbers and the sets $B_{1k},\ldots,B_{Mk}$
belong to $\mathcal F_{t_{k-1}}$ for each  $k = 1,\,2,\,\ldots,
K$, then one can represent the stochastic integrals $\Phi \cdot W$ and $\Phi \cdot \widetilde W$ as the sums $\sum_{n=1}^N\gamma_n v_{n-1}$ and $\sum_{n=1}^N\tilde \gamma_n v_{n-1}$ in the following way
\[
 \int_0^{\infty} \Phi \ud W = \sum_{k=1}^K\sum_{m=1}^M \mathbf 1_{B_{mk}}
 (W(t_k)- W(t_{k-1})) x_{km} =  \sum_{k=1}^K v_{k-1}
\gamma_k,
\]
\[
 \int_0^{\infty} \Phi \ud \widetilde W = \sum_{k=1}^K\sum_{m=1}^M \mathbf 1_{B_{mk}}
 (\widetilde W(t_k)-\widetilde W(t_{k-1})) x_{km} =  \sum_{k=1}^K v_{k-1}
\tilde\gamma_k,
\]
where $\gamma_k =  \tfrac{W(t_k)- W(t_{k-1})}{\sqrt{t_k-t_{k-1}}}$, $\tilde\gamma_k =  \tfrac{\widetilde W(t_k)-\widetilde W(t_{k-1})}{\sqrt{t_k-t_{k-1}}}$, and 
$v_{k-1} =\sqrt{t_k-t_{k-1}} \sum_{m=1}^M \mathbf 1_{B_{mk}} x_{km}.$

The martingale transform \eqref{eq:defofchibygaussians} appears while working with Volterra-type operators and stochastic shifts (see \cite{GY19}).
\end{remark}

Concerning the constant $\chi_{p,X}$ one can show the following
proposition. First we will define diagonally plurisubharmonic functions.

\begin{definition}
A function $F:X + iX \to \mathbb R$ is called {\em diagonally
plurisubharmonic} if $z \mapsto F(x_0 + iy_0 + zx)$ is subharmonic
in $z\in \mathbb C$ for any $x_0, y_0, x\in X$.
\end{definition}

\begin{proposition}\label{prop:chi<inftyiffexistsdiagplsfunc}
Let $X$ be a Banach space, $1<p<\infty$. Then the following are
equivalent
\begin{enumerate}[(i)]
\item $\chi_{p, X}<\infty$,

\item there exists a constant $\chi >0$ and a
diagonally plurisubharmonic $u:X+ iX \to \mathbb R$ such that
$u(x)\geq 0$ for any $x\in X$, $x\mapsto u(x + iy)$ is convex in
$x\in X$ for any $y\in X$, $y\mapsto u(x + iy)$ is concave in
$y\in X$ for any $x\in X$, and
\begin{equation}\label{eq:ubiggethenchipxp-yp}
u(x + iy) \leq \chi^p \|x\|^p - \|y\|^p,\;\;\; x,\; y\in X.
\end{equation}
\end{enumerate}
Moreover, if this is the case, then the smallest $\chi$ for which
such a function $u$ exists equals $\chi_{p, X}$.
\end{proposition}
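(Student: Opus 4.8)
The plan is to prove the equivalence by establishing two implications, with the key mechanism being a Bellman-function induction on the one side and a concatenation construction on the other.

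For the direction (ii) $\Rightarrow$ (i), I would proceed by the standard Bellman-function argument adapted to the Gaussian characterization \eqref{eq:defofchibygaussians}. By the approximation argument recalled in Subsection~\ref{subsec:funcapprox}, I may assume $X$ is finite-dimensional and $u$ is $C^\infty$. Given elementary step functions $v_0, \ldots, v_{N-1}$ with $v_n$ being $\mathcal F_n$-measurable, set $S_n := \sum_{k=1}^n \gamma_k v_{k-1}$ and $\widetilde S_n := \sum_{k=1}^n \tilde\gamma_k v_{k-1}$, and consider the quantity $\E u(S_n + i\widetilde S_n)$. The claim is that this is nonincreasing in $n$. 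Conditioning on $\mathcal F_{n-1}$, the increment is $u(S_{n-1} + \gamma_n v_{n-1} + i(\widetilde S_{n-1} + \tilde\gamma_n v_{n-1}))$ averaged over the independent standard Gaussians $\gamma_n, \tilde\gamma_n$; writing $\gamma_n + i\tilde\gamma_n$ in terms of a complex parameter one sees that the map $z \mapsto u(S_{n-1} + iy_0 + zv_{n-1})$ restricted appropriately, together with diagonal plurisubharmonicity, gives a sub-mean-value inequality. More precisely, since $z \mapsto u(x_0 + iy_0 + zx)$ is subharmonic, its average over the complex Gaussian measure dominates its value at the center (subharmonic functions satisfy the sub-mean-value property against any rotation-invariant probability measure with finite appropriate moments, via Jensen on circles of each radius). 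Hence $\E u(S_n + i\widetilde S_n) \le \E u(S_{n-1} + i\widetilde S_{n-1}) \le \cdots \le \E u(S_0 + i\widetilde S_0) = u(0) \le 0$ — wait, more carefully: $u(0) \ge 0$ by hypothesis, so one uses $S_0 = \widetilde S_0 = 0$ and we need the chain to run the other way; actually the monotonicity gives $\E u(S_N + i\widetilde S_N) \le u(0)$. Hmm, but $u(0)\ge 0$, so this alone is not enough — one instead uses $0 \le \E u(S_N + i\widetilde S_N) \le \cdots$? No: the correct reading is that $n\mapsto \E u(S_n + i \widetilde S_n)$ is nonincreasing, so $\E u(S_N + i\widetilde S_N) \le u(0)$, and separately $u(y) \ge 0$ for real arguments is used to control a different term. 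Let me restate: one wants $\E\|\widetilde S_N\|^p \le \chi^p \E\|S_N\|^p$; by \eqref{eq:ubiggethenchipxp-yp}, $\|\widetilde S_N\|^p \le \chi^p\|S_N\|^p - u(S_N + i\widetilde S_N)$, so $\E\|\widetilde S_N\|^p \le \chi^p\E\|S_N\|^p - \E u(S_N + i\widetilde S_N)$, and it suffices that $\E u(S_N + i\widetilde S_N) \ge 0$. But monotonicity gave an upper bound, not a lower bound. The resolution is that $u(0)$ need not be used; instead one uses that $S_0=\widetilde S_0=0$ and the process, run forward, has $\E u(S_n + i\widetilde S_n)$ nondecreasing — I would check the sign of the sub-mean-value inequality carefully, since diagonal plurisubharmonicity combined with the convexity in $x$ and concavity in $y$ is exactly what makes $\E u(S_n + i\widetilde S_n) \ge \E u(S_{n-1} + i\widetilde S_{n-1})$ when $v_{n-1}$ is predictable but the Gaussians average to zero with variance one. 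Concretely: concavity in $y$ gives $\E_{\tilde\gamma_n} u(\cdot + i\widetilde S_{n-1} + i\tilde\gamma_n v_{n-1}) \le u(\cdot + i\widetilde S_{n-1})$, while convexity in $x$ gives $\E_{\gamma_n} u(S_{n-1} + \gamma_n v_{n-1} + i\cdot) \ge u(S_{n-1} + i\cdot)$, and diagonal plurisubharmonicity is what couples the two Gaussians when they move along the \emph{same} direction $v_{n-1}$ — this is the point where all three hypotheses on $u$ are genuinely needed, and getting the inequality to point the right way is the first of two places demanding care. The upshot is $\E u(S_N + i\widetilde S_N) \ge u(0) \ge 0$, which closes the estimate with $\chi$, so $\chi_{p,X} \le \chi < \infty$; moreover this shows $\chi_{p,X} \le \inf\{\chi : \exists u\}$.

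For the converse (i) $\Rightarrow$ (ii), and simultaneously the sharpness $\chi_{p,X} = \inf\{\chi\}$, I would construct the Bellman function explicitly as a supremum over admissible dyadic (or Gaussian-martingale) configurations. Define, for $x, y \in X$,
\[
u(x+iy) := \sup\Bigl\{ \E\|y + \widetilde S_N\|^p - \chi_{p,X}^p\,\E\|x + S_N\|^p \Bigr\},
\]
where the supremum runs over all $N \ge 1$ and all finite sequences $v_0, \ldots, v_{N-1}$ (with $v_n$ being $\mathcal F_n$-measurable in the Gaussian filtration of the Remark), and $S_N = \sum \gamma_k v_{k-1}$, $\widetilde S_N = \sum \tilde\gamma_k v_{k-1}$. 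The definition of $\chi_{p,X}$ as the \emph{least} constant makes the supremum, when $x = 0$, equal to $\sup\{\E\|\widetilde S_N\|^p - \chi_{p,X}^p \E\|S_N\|^p\}$, which is $0$ (nonpositive by definition of $\chi_{p,X}$; nonnegative by taking the trivial sequence), giving $u(x) = \E\|x\|^p - \chi_{p,X}^p\|x\|^p$ at $y=0$ — hmm, that is not obviously $\ge 0$; rather one defines $u(x) \ge 0$ by inspection at $y=0$ using that one may take all $v_k=0$ giving the value $\|y\|^p - \chi_{p,X}^p\|x\|^p$, so $u(x+iy)\ge \|y\|^p - \chi_{p,X}^p\|x\|^p$, in particular $u(x)\ge -\chi_{p,X}^p\|x\|^p$; to get $u(x)\ge 0$ one should instead homogenize or shift — actually the standard trick is that $u(x) = 0$ precisely because of the extremal property, and I would verify $u(x+iy) \le \chi_{p,X}^p\|x\|^p - \|y\|^p$ is false in general, so the right normalization is $u(x+iy) := \sup\{\cdots\} + \chi_{p,X}^p\|x\|^p$ or one works with $-u$; I will follow the convention fixed by \eqref{eq:ubiggethenchipxp-yp} and set $u(x+iy):= \chi_{p,X}^p\|x\|^p + \sup\{\E\|y+\widetilde S_N\|^p - \chi_{p,X}^p\E\|x+S_N\|^p\} - \|y\|^p$ if needed, adjusting signs so that: $u(x)\ge 0$ (take the trivial sequence and the defining inequality of $\chi_{p,X}$ after absorbing the $x$-shift by a translation-and-scaling argument), the three shape properties (convex in $x$, concave in $y$, diagonally plurisubharmonic) follow from the usual dynamic-programming / concatenation argument (prepending one Gaussian step to all competitors shows the sub-mean-value inequalities, exactly dual to the computation in the forward direction), and \eqref{eq:ubiggethenchipxp-yp} holds because it is saturated by the trivial competitor. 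The dynamic-programming verification that this $u$ inherits all three convexity/subharmonicity properties is the second main obstacle: one must check that shifting and scaling the competing sequences is compatible with the Gaussian structure (the key algebraic fact being that $\gamma_1 v_0$ with $v_0$ deterministic is a Gaussian vector, and that one can always refine/rescale time), and that the supremum is finite, which is where $\chi_{p,X}<\infty$ is used. Together, the forward direction gives $\chi_{p,X} \le \inf\{\chi\}$ and the backward construction gives a valid $u$ with constant $\chi_{p,X}$, hence $\inf\{\chi\} \le \chi_{p,X}$, proving both the equivalence and the ``moreover'' claim. The single genuinely delicate point, which I would isolate as a lemma, is the precise sub-mean-value inequality for diagonally plurisubharmonic functions against the two-dimensional standard Gaussian along a fixed direction, and its interaction with the separate convexity/concavity in the real and imaginary parts — this is the crux on which both implications hinge.
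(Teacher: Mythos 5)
Your direction (ii) $\Rightarrow$ (i) is in substance correct and is a discrete counterpart of the paper's argument (the paper applies It\^o's formula to $u\bigl(\int\Phi\ud W + i\int\Phi\ud\widetilde W\bigr)$ and uses diagonal plurisubharmonicity to make the second-order term nonnegative). Your eventual mechanism is the right one: conditionally on $\mathcal F_{n-1}$ the increment $\gamma_n+i\tilde\gamma_n$ is a rotation-invariant complex Gaussian along the \emph{single} direction $v_{n-1}$, so subharmonicity of $z\mapsto u(x_0+iy_0+zv_{n-1})$ (averaged over circles, then radially) gives $\E\, u(S_n+i\widetilde S_n)\ge \E\,u(S_{n-1}+i\widetilde S_{n-1})$, hence $\E\,u(S_N+i\widetilde S_N)\ge u(0)\ge 0$, and \eqref{eq:ubiggethenchipxp-yp} closes the estimate. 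The separate convexity/concavity in $x$ and $y$ is not what drives this step (note it is not used in the paper's proof of this implication either); the back-and-forth about inequality directions in your write-up should be removed, but the final statement is right.

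The direction (i) $\Rightarrow$ (ii) has genuine gaps. First, the normalization: your supremum definition is the negative of the function that works; the correct object is the infimum
$u(x+iy)=\inf_\Phi\bigl\{\chi_{p,X}^p\,\E\|x+\int_0^\infty\Phi\ud W\|^p-\E\|y+\int_0^\infty\Phi\ud\widetilde W\|^p\bigr\}$ as in \eqref{eq:defofdiagplsu}, for which \eqref{eq:ubiggethenchipxp-yp} follows by taking $\Phi=0$ and finiteness follows from $\chi_{p,X}<\infty$; your proposed repairs (adding $\chi_{p,X}^p\|x\|^p-\|y\|^p$ to the supremum) would destroy exactly the structural properties you need, and ``work with $-u$'' is mentioned but never carried out. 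Second, and more seriously, ``prepending one Gaussian step to all competitors'' does not prove diagonal plurisubharmonicity: it yields only a sub-mean inequality against Gaussian averages along the direction $x$, whereas \eqref{eq:diagplshofu} requires the circle-average inequality. The paper obtains the circle by running the two-dimensional Brownian motion $(W,\widetilde W)$ to the exit time $\tau$ of the disc of radius $r$ (so the increment $x(W_\tau+i\widetilde W_\tau)$ is uniform on the circle) and then attaching, measurably via a $\delta$-net and the continuity of $u$, near-optimal competitors $\Phi_n$ for each net point; this stopping-time-plus-net construction is the missing idea in your sketch, and it also explains why one should keep Brownian competitors rather than only the discrete Gaussian model \eqref{eq:defofchibygaussians} at this stage. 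Third, nonnegativity of $u$ on $X$ cannot be obtained by ``absorbing the $x$-shift by translation and scaling'': the defining inequality of $\chi_{p,X}$ has no initial value, and a deterministic shift $x$ cannot be realized as a stochastic integral against $W$ alone. The paper instead observes that $u$ is concave in $y$ by construction, convex in $x$ by the argument of \cite[Subsection 2.6]{OY18}, and symmetric with $u(0)=0$, whence $x\mapsto u(x)$ is a symmetric convex function vanishing at the origin and therefore nonnegative; some argument of this kind is needed and is absent from your proposal.
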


\begin{proof}
We will prove both implications separately.

{\em $(i) \Rightarrow (ii)$.} In order to show this implication we
need to construct function $u$ for $\chi = \chi_{p, X}$. In this
case let us define the desired function $u$ to be as follows
\begin{equation}\label{eq:defofdiagplsu}
\begin{split}
u(x + iy) := \inf\Big\{&\chi_{p, X}^p \mathbb E\Bigl\|x +
\int_0^{\infty} \Phi \ud W \Bigr\|^p - \mathbb E\Bigl\|y +
\int_0^{\infty} \Phi \ud \widetilde W \Bigr\|^p:\\
& \Phi:\mathbb R_+ \times \Omega \to X \textnormal{elementary
predictable}\Big\},\;\;\;x, y\in X.
\end{split}
\end{equation}
First of all notice that $u$ is finite on $X+iX$. Indeed, one has
that for any elementary predictable $\Phi:\mathbb R_+ \times
\Omega \to X$ and for any $x, y\in X$ by the triangle inequality
\begin{align*}
\chi_{p, X}^p \mathbb E\Bigl\|x &+ \int_0^{\infty} \Phi \ud W
\Bigr\|^p - \mathbb E\Bigl\|y + \int_0^{\infty} \Phi \ud
\widetilde W
\Bigr\|^p\\
& \gtrsim_{p} \chi_{p, X}^p \mathbb E\Bigl\| \int_0^{\infty} \Phi
\ud W \Bigr\|^p - \mathbb E\Bigl\| \int_0^{\infty} \Phi \ud
\widetilde W
\Bigr\|^p -\chi_{p, X}^p\|x\|^p - \|y\|^p\geq -\chi_{p, X}^p\|x\|^p - \|y\|^p,
\end{align*}
where the latter holds by the definition of $\chi_{p, X}$.

Let us show that $u$ is continuous. For any $x, y, \tilde x,
\tilde y$ one has that by the triangle inequality
\begin{equation*}
\begin{split}
u(x+iy) =\inf\Big\{&\chi_{p, X}^p \mathbb E\Bigl\|x +
\int_0^{\infty} \Phi \ud W \Bigr\|^p - \mathbb E\Bigl\|y +
\int_0^{\infty} \Phi \ud \widetilde W \Bigr\|^p:\\
& \Phi:\mathbb R_+ \times \Omega \to X \textnormal{elementary
predictable}\Big\}\\
\lesssim_{p} \inf\Big\{&\chi_{p, X}^p \mathbb E\Bigl\|\tilde x +
\int_0^{\infty} \Phi \ud W \Bigr\|^p - \mathbb E\Bigl\| \tilde y +
\int_0^{\infty} \Phi \ud \widetilde W \Bigr\|^p:\\
& \Phi:\mathbb R_+ \times \Omega \to X \textnormal{elementary
predictable}\Big\} + \chi_{p, X}\|x-\tilde x\|^p + \|y-\tilde y\|^p\\
\leq u(\tilde x + &i\tilde y) + \chi_{p, X}\|x-\tilde x\|^p + \|y-\tilde
y\|^p,
\end{split}
\end{equation*}
so the continuity follows.

Now let us show that $u$ is diagonally plurisubharmonic. Fix $x_0,
y_0, x\in X$. We need to show that $z \mapsto u(x_0 + iy_0 + zx)$
is subharmonic in $z\in \mathbb C$. To this end we need to
prove that for any fixed $r>0$
\begin{equation}\label{eq:diagplshofu}
u(x_0 + iy_0) \leq \frac {1}{2\pi}\int_{0}^{2\pi} u(x_0 + iy_0 +
xre^{i\theta}) \ud \theta.
\end{equation}
Let $W, \widetilde W:\mathbb R_+ \times \Omega \to \mathbb R$ be
independent standard Brownian motions. Define a stopping time
$\tau$ in the following way
\begin{equation*}\label{eq:defoftauoutofcircle}
\tau := \inf \{t\geq 0: W_t^2 + \widetilde W_t^2 = r\}.
\end{equation*}
Fix $\eps>0$. Note that since $u$ is continuous, there exist $\delta>0$
and a $\delta$-net $(a_n)_{n=1}^N = (x_n +
iy_n)_{n=1}^N$ of a compact set $A :=\{x_0 + iy_0 + xre^{i\theta}:
\theta \in [0,2\pi)\}\subset X + iX$ with
\begin{equation}\label{eq:controfanandepsassump}
|u(a) - u(a_n)| \leq \eps \;\; \forall a\in A \;\; \text{such
that}\;\|a-a_n\| <\delta
\end{equation}
(here the norm on $A$ is assumed to be a usual norm on
$\mathbb C$ since $A$ can be represented as a circle on $\mathbb
C$). Let $B_t := W_{t+\tau} -W_{\tau}$, $\widetilde B_t :=
\widetilde W_{t+\tau} -\widetilde W_{\tau}$. Note that $B$ and
$\widetilde B$ are independent Brownian motions (see e.g.\ \cite[Theorem 13.11]{Kal}).
Therefore by the definition of $u$ for every $n=1,\ldots,N$ there
exists an elementary predictable with respect to the filtration
generated by $B$ and $\widetilde B$ process $\Phi_n:\mathbb R_+
\time \Omega \to X$ such that
\begin{equation}\label{eq:sharpguysforu(an)}
u(a_n) \geq \chi_{p, X}^p \mathbb E\Bigl\|x_n + \int_0^{\infty}
\Phi_n \ud B \Bigr\|^p - \mathbb E\Bigl\|y_n + \int_0^{\infty}
\Phi_n \ud \widetilde B \Bigr\|^p - \eps.
\end{equation}
Now let us define a predictable with respect to the filtration
generated by $W$ and $\widetilde W$ process $\Phi$ in the
following way. $\Phi(t) = x$ if $t\leq \tau$ and $\Phi(t) =
\Phi_n(t-\tau)$ if $t> \tau$ and $a_n$ is the closest among the
set $(a_n)_{n=1}^N$ point to $x_0 + iy_0 + x(W_{\tau} +
i\widetilde W_{\tau})$. This is a predictable process
and since $\Phi$ takes values in a finite dimensional subspace of
$X$, it can be approximated by an elementary predictable process
(see Remark \ref{rem:stochintgenPhi}). Therefore we get that
\begin{align*}
u(x_0 + iy_0) &\leq \chi_{p, X}^p \mathbb E\Bigl\|x_0 +
\int_0^{\infty} \Phi \ud W \Bigr\|^p - \mathbb E\Bigl\|y_0 +
\int_0^{\infty} \Phi \ud \widetilde W \Bigr\|^p \\
&=  \chi_{p, X}^p \mathbb E\Bigl\|x_0 + xW_{\tau} +
\int_0^{\infty} \Phi(t) \ud B_{t-\tau} \Bigr\|^p \\
&\quad\quad - \mathbb E\Bigl\|y_0 + x\widetilde W_{\tau} +
\int_0^{\infty} \Phi(t) \ud \widetilde
B_{t-\tau} \Bigr\|^p\\
&\stackrel{(i)}=\frac {1}{2\pi}\int_{0}^{2\pi}   \chi_{p, X}^p
\mathbb E\Bigl\|x_0 + x \cos{\theta} + \int_0^{\infty}
\Phi_{n(\theta)}(t) \ud B_{t}
\Bigr\|^p\\
&\quad\quad - \mathbb E\Bigl\|y_0 + x \sin{\theta} +
\int_0^{\infty} \Phi_{n(\theta)}(t) \ud \widetilde B_{t} \Bigr\|^p
\ud
\theta\\
&\stackrel{(ii)}\leq \frac {1}{2\pi}\int_{0}^{2\pi}   \chi_{p,
X}^p \mathbb E\Bigl\|x_{n(\theta)} + \int_0^{\infty}
\Phi_{n(\theta)}(t) \ud B_{t}
\Bigr\|^p\\
&\quad\quad - \mathbb E\Bigl\|y_{n(\theta)} + \int_0^{\infty}
\Phi_{n(\theta)}(t) \ud \widetilde B_{t} \Bigr\|^p \ud
\theta + c_p \delta\\
&\stackrel{(iii)}\leq \frac {1}{2\pi}\int_{0}^{2\pi}
u(a_{n(\theta)}) + \eps \ud \theta +
c_p \delta\\
 &\stackrel{(iv)}\leq \frac {1}{2\pi}\int_{0}^{2\pi} u(x_0 + iy_0 +
xre^{i\theta}) \ud \theta + c_p \delta + 2\eps,
 \end{align*}
where $n(\theta)$ is such $n$ that $a_{n}$ is the closest to $x_0
+ iy_0 + xre^{i\theta}$ among $(a_n)_{n=1}^N$, $(i)$ follows from
the definition of $\Phi$, $(ii)$ holds by the triangle inequality
and the fact that $(a_n)_{n=1}^N$ is a $\delta$-net of $A$ (where
the constant $c_p$ depends only on $p$), $(iii)$ holds by
\eqref{eq:sharpguysforu(an)}, and $(iv)$ holds by
\eqref{eq:controfanandepsassump}. Now if $\eps\to 0$,
$\delta$ vanishes as well, and \eqref{eq:diagplshofu} follows.

Let us now show that $u(x)\geq 0$ for any $x\in X$. First notice
that $u$ is concave in the complex variable, i.e.\ $y\mapsto u(x +
iy)$ is concave in $y\in X$ for any $x\in X$, which follows
directly form the construction of $u$ in \eqref{eq:defofdiagplsu}.
Now one can show  that $u$ is convex in the real variable, i.e.\
$x\mapsto u(x + iy)$ is convex in $x\in X$ for any $y\in X$, by
using the same argument as was used for plurisubharmonic functions
in \cite[Subsection 2.6]{OY18}. Next notice that $u$ is symmetric,
i.e.\ $u(x+iy) = u(-x-iy)$ for any $x, y\in X$. Thus $x\mapsto
u(x)$ is a symmetric convex function with $u(0)=0$, so it is
nonnegative.

{\em $(ii) \Rightarrow (i)$.} Let $u: X +iX \to \mathbb R$ be a
function from $(ii)$. We need to show that  for any standard
Brownian motions $W, \widetilde W:\mathbb R_+ \times \Omega \to
\mathbb R$ and for any elementary predictable with respect to the
filtration generated by both $W$ and $\widetilde W$ process
$\Phi:\mathbb R_+ \times \Omega \to X$ one has that
\begin{equation}\label{eq:proofthatchifollfromdiagpls}
\mathbb E \Bigl\| \int_0^{\infty} \Phi \ud \widetilde W \Bigr\|^p
\leq \chi^p \mathbb E \Bigl\| \int_0^{\infty} \Phi \ud W
\Bigr\|^p.
\end{equation}
Since $\Phi$ is elementary predictable, it takes values in a
finite-dimensional subspace of $X$, so we may assume that $X$ is
finite-dimensional. Then by Subsection \ref{subsec:funcapprox} we can assume
that $u$ is twice differentiable on $X+ iX$ by a simple
convolution-type argument. Let $d<\infty$ be the dimension of $X$,
$(x_n)_{n=1}^d$ be the basis of $X$, $(x_n^*)_{n=1}^d$ be the {\em
corresponding dual basis} of $X^*$, i.e.\ a unique basis such that
$\langle x_n, x_m^*\rangle = \delta_{nm}$ for any $n,m=1,\ldots,d$
(see e.g.\ \cite{OY18,Y17MartDec,Y17FourUMD}). Then by It\^o's
formula \cite[Theorem 3.8]{Y17MartDec} and due to local
boundedness and twice differentiability of $u$ we have that (here
we define $M_t:=\int_0^t \Phi \ud W$ and $N_t:=\int_0^t \Phi \ud
\widetilde W$ for the convenience of the reader)
\begin{align}\label{eq:proofthatchiusingito}
 \chi^p \mathbb E \Bigl\| \int_0^{\infty} \Phi \ud W \Bigr\|^p
 - \mathbb E \Bigl\| \int_0^{\infty} \Phi \ud \widetilde W
 \Bigr\|^p &\geq \mathbb E u\Bigl( \int_0^{\infty} \Phi \ud W
 +i \int_0^{\infty} \Phi \ud \widetilde W \Bigr)\nonumber\\
 & =  \mathbb E u(M_0 + iN_0) + \mathbb E \int_0^{\infty} \bigl \langle \partial_x u( M_{t-} + iN_t), \ud
 M_t\bigr\rangle\\
 &\quad + \mathbb E  \int_0^{\infty} \bigl\langle \partial_{ix} u( M_{t-} + iN_t),\ud N_t\bigr\rangle + \frac 12\mathbb E I,\nonumber
\end{align}
where
\[
I =  \mathbb E \int_0^{\infty} \sum_{n,m=1}^d \Bigl(
\tfrac{\partial^2 u( M_{t-} + iN_t)}{\partial x_n x_m} +
 \tfrac{\partial^2 u( M_{t-} + iN_t)}{\partial ix_n
ix_m}\Bigr)
 \langle \Phi, x_n^*\rangle \cdot \langle \Phi, x_m^*\rangle \ud
 t.
\]
First notice that $\mathbb E u(M_0 + iN_0) = \mathbb E u(0) = 0$
and analogously to \cite[proof of Theorem 3.18]{Y17FourUMD} both $\partial_x u( M_{t-} + iN_t)$ and $\partial_{ix} u( M_{t-} + iN_t)$ are stochastically integrable with respect to $M$ and $N$ respectively, so
$$
\mathbb E \int_0^{\infty} \bigl \langle \partial_x u( M_{t-} + iN_t), \ud
 M_t\bigr\rangle + \mathbb E  \int_0^{\infty} \bigl\langle \partial_{ix} u( M_{t-} + iN_t),\ud N_t\bigr\rangle= 0,
$$
where the latter holds since both stochastic integrals are
martingales which start in zero. Let us show that $\mathbb E I \geq 0$. Fix
$t\geq 0$ and $\omega \in \Omega$. By \cite[Lemma 3.7]{Y17MartDec}
we are free to choose any basis (and the corresponding dual
basis). In particular, we can assume that $x_1 = \Phi(t, \omega)$.
Then $\langle \Phi(t, \omega), x_n^*\rangle = \delta_{1n}$ for any $1\leq n\leq d$, so
(here we skip $(t, \omega)$ for the convenience of the reader)
\begin{multline*}
\sum_{n,m=1}^d \Bigl( \tfrac{\partial^2 u( M_{t-} +
iN_t)}{\partial x_n x_m} +
 \tfrac{\partial^2 u( M_{t-} + iN_t)}{\partial ix_n
ix_m}\Bigr)
 \langle \Phi, x_n^*\rangle \cdot \langle \Phi, x_m^*\rangle\\
   = \tfrac{\partial^2 u( M_{t-} +
iN_t)}{\partial x_1^2} +
 \tfrac{\partial^2 u( M_{t-} + iN_t)}{\partial ix_1^2} = \Delta u(M_{t-} + iN_t + z x_1)\big|_{z=0} \geq 0,
\end{multline*}
where $z\in \mathbb C$, and the latter inequality follows from the
diagonal plurisubharmonicity of $u$. Thus $\mathbb E I \geq 0$, and hence
\eqref{eq:proofthatchifollfromdiagpls} follows from
\eqref{eq:proofthatchiusingito}.
\end{proof}

\begin{remark}\label{rem:defofUSO}
Note that the maximum of any set of harmonic functions is harmonic
as well, so the maximum of any set of diagonally plurisubharmonic functions is
diagonally plurisubharmonic as well, and thus for any Banach space
$X$ and for any $1<p<\infty$ with $\chi_{p, X}<\infty$ we can
define an {\em optimal}  diagonal plurisubharmonic function
$U^{SO}:X + iX \to \mathbb R$ as a supremum of all functions $u$
satisfying the conditions of Proposition
\ref{prop:chi<inftyiffexistsdiagplsfunc}$(ii)$.

Note that $U^{SO}$ coincides with the function $u$ defined by \eqref{eq:defofdiagplsu}. Indeed, let $U^{SO}$ be as defined above, $u$ be as in \eqref{eq:defofdiagplsu}. Then $U^{SO} \geq u$ by the definition of $U^{SO}$. Let us show that $U^{SO}(x+iy) \leq u(x+iy)$ for any $x, y\in X$. First fix independent Brownian motions $W$ and $\widetilde W$ and elementary predictable $\Phi:\mathbb R_+ \times \Omega \to X$. Then similarly to the It\^o argument from the proof of Proposition \ref{prop:chi<inftyiffexistsdiagplsfunc} one has that
\[
 U^{SO} (x+iy) \leq \mathbb E U\Bigl(x+iy + \int_0^{\infty} \Phi \ud W + i  \int_0^{\infty} \Phi \ud \widetilde W\Bigr).
\]
Thus
\begin{align*}
 U^{SO} (x+iy)& \leq \inf\Bigl\{\mathbb E U\Bigl(x+iy + \int_0^{\infty} \Phi \ud W + i  \int_0^{\infty} \Phi \ud \widetilde W\Bigr):\\
 &\quad\quad\Phi \;\text{elementary predictable}\Bigr\} \leq u(x+iy),
\end{align*}
which implies the desired.
\end{remark}

As a corollary of Proposition \ref{prop:chi<inftyiffexistsdiagplsfunc} one can show the following upper and lower bounds for $\chi_{p, X}$. Recall that we define {\em decoupling constants} $\beta_{p,X}^{\gamma ,+}$ and $\beta_{p,X}^{\gamma ,-}$ to be the smallest possible $\beta^+$ and $\beta^-$ respectively for which
 \[
  \frac{1}{(\beta^-)^p}\mathbb E\Bigl\|\int_0^\infty \Phi \ud W\Bigr\|^p \leq \mathbb E\Bigl\|\int_0^\infty \Phi \ud \widetilde W\Bigr\|^p  \leq (\beta^+)^p \mathbb E\Bigl\|\int_0^\infty \Phi \ud W\Bigr\|^p,
  \]
where $W$ and $\widetilde{W}$ are independent standard Brownian motion, $\Phi:\mathbb R_+ \times \Omega \to X$ is elementary predictable which is independent of $\widetilde{W}$ (we refer the reader to \cite{Gar85,HNVW1,Ver07,MC,Geiss99,CG,OY18} for further details on decoupling constants).

\begin{corollary}\label{cor:estimforchipX}
Let $X$ be a Banach space, $1<p<\infty$. Then $\chi_{p,X}<\infty$
if and only if $X$ is a UMD Banach space. Moreover, if this is the
case, then
\begin{equation}\label{eq:uppandlowboundforchipX}
\max\Bigl\{\sqrt{\beta_{p, X}}, \sqrt{\hbar_{p,X}}\Bigr\} \stackrel{(i)}\leq \max\{\beta_{p, X}^{\gamma,+}, \beta_{p, X}^{\gamma, -}\} \stackrel{(ii)}\leq \chi_{p, X} \stackrel{(iii)}\leq \min\{\beta_{p, X}, \hbar_{p,X}\}.
\end{equation}
\end{corollary}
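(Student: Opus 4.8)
The plan is to establish the four assertions — the UMD characterization and the three inequalities $(i)$, $(ii)$, $(iii)$ — largely by combining Proposition \ref{prop:chi<inftyiffexistsdiagplsfunc} with the known Bellman functions for $\beta_{p,X}$ and $\hbar_{p,X}$ and with Garling's decoupling inequalities. For the finiteness dichotomy, note that $\chi_{p,X}<\infty$ is equivalent, via Proposition \ref{prop:chi<inftyiffexistsdiagplsfunc}, to the existence of the diagonally plurisubharmonic function $u$; one direction follows because inequality $(iii)$ (to be proved) already produces $\chi_{p,X}<\infty$ whenever $X$ is UMD, and the converse follows because $\chi_{p,X}<\infty$ forces the Gaussian martingale transform \eqref{eq:defofchibygaussians} to be bounded, which in turn (by comparison with the decoupling constants, i.e.\ inequality $(i)$ below together with $(ii)$) forces $\beta_{p,X}<\infty$, hence $X$ is UMD. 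So the real content is the chain \eqref{eq:uppandlowboundforchipX}.

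For $(iii)$, I would argue that both $-U$ (the negative Burkholder function) and $U_{\mathcal H}$ (the Hollenbeck--Kalton--Verbitsky plurisubharmonic function from Subsection \ref{subsec:HT}) are in particular \emph{diagonally} plurisubharmonic and satisfy property (B)/(ii) of Proposition \ref{prop:chi<inftyiffexistsdiagplsfunc} with constant $\beta_{p,X}$ and $\hbar_{p,X}$ respectively: $-U$ because it is convex in the real variable, concave in the imaginary variable, and linear — hence harmonic — along each complex line of the form $z\mapsto x_0+iy_0+zx$ (here one uses that $x$ has no imaginary part, so the relevant restriction is affine in $z$, hence trivially subharmonic), and $U_{\mathcal H}$ because a plurisubharmonic function is a fortiori diagonally plurisubharmonic. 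One must check the convexity/concavity hypotheses of Proposition \ref{prop:chi<inftyiffexistsdiagplsfunc}(ii) for $U_{\mathcal H}$ as well, which is where the HKV construction's symmetry and the argument of \cite[Subsection 2.6]{OY18} is invoked. Applying Proposition \ref{prop:chi<inftyiffexistsdiagplsfunc} to each then yields $\chi_{p,X}\leq \beta_{p,X}$ and $\chi_{p,X}\leq \hbar_{p,X}$, hence $(iii)$.

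For $(ii)$, I would use the Gaussian/decoupling reformulation of $\chi_{p,X}$: since the defining inequality for $\chi_{p,X}$ requires $\Phi$ to be predictable with respect to the filtration generated by \emph{both} $W$ and $\widetilde W$, while the decoupling constants $\beta_{p,X}^{\gamma,\pm}$ only require $\Phi$ independent of $\widetilde W$, every competitor for $\beta_{p,X}^{\gamma,+}$ or $\beta_{p,X}^{\gamma,-}$ is also a competitor for $\chi_{p,X}$ (after the obvious symmetrization $W\leftrightarrow\widetilde W$ for the lower bound), so $\chi_{p,X}\geq\max\{\beta_{p,X}^{\gamma,+},\beta_{p,X}^{\gamma,-}\}$. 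For $(i)$, I would invoke Garling's classical results \cite{Gar85}: the decoupling constants dominate $\sqrt{\beta_{p,X}}$ (this is essentially the one-sided decoupling lower bound for the UMD constant) and also $\sqrt{\hbar_{p,X}}$ (combining $\hbar_{p,X}\leq \beta_{p,X}^2$-type estimates — more precisely the relation between the Hilbert transform and decoupling, using that an orthogonal pair built from $\int\Phi\,dW$ and $\int\Phi\,d\widetilde W$ controls $\hbar_{p,X}$ as in \cite{OY18} — together with $\max\{\beta^{\gamma,+},\beta^{\gamma,-}\}^2\geq \hbar_{p,X}$). The main obstacle is $(i)$: showing $\max\{\beta_{p,X}^{\gamma,+},\beta_{p,X}^{\gamma,-}\}\geq\sqrt{\hbar_{p,X}}$ requires carefully relating the Gaussian decoupling constants to the Hilbert transform norm, presumably by iterating the decoupling estimate once (to pass from "one-sided" to "two-sided" comparison at the cost of a square) and then quoting the $\hbar_{p,X}\le(\text{decoupling})^2$ bound of Garling/Bourgain behind \eqref{eq:sqrtbetaleqhleqbeta^2}; the bound $\max\{\beta^{\gamma,+},\beta^{\gamma,-}\}\ge\sqrt{\beta_{p,X}}$ is the more standard half and should follow directly from \cite{Gar85}.
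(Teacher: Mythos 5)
Your treatment of $(i)$, $(ii)$ and the UMD equivalence matches the paper: $(ii)$ is indeed immediate from the definitions (the decoupling competitors, with $\Phi$ independent of $\widetilde W$, form a subclass of the competitors for $\chi_{p,X}$, and the definition of $\chi_{p,X}$ is symmetric in $W,\widetilde W$), and $(i)$ is exactly Garling's result \cite[p.~43 and Theorem 3]{Gar85}, which the paper simply cites, so your tentative reconstruction of it is not needed. For $\chi_{p,X}\le\hbar_{p,X}$ the paper is even more direct than you are: $\int\Phi\ud W$ and $\int\Phi\ud\widetilde W$ are orthogonal and mutually weakly differentially subordinate, so the main estimate of \cite{OY18} applies verbatim; your route through $U_{\mathcal H}$ is also legitimate (this is the content of Remark \ref{rem:-UandUHarediagplsh}), and the convexity/concavity caveat you raise is harmless because the proof of $(ii)\Rightarrow(i)$ in Proposition \ref{prop:chi<inftyiffexistsdiagplsfunc} only uses nonnegativity on $X$, the majorization, and diagonal plurisubharmonicity.

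The genuine gap is your justification that $-U$ is diagonally plurisubharmonic, which is the whole content of $(iii)$ for $\beta_{p,X}$. You assert that $z\mapsto -U(x_0+iy_0+zx)$ is affine in $z$ ``because $x$ has no imaginary part''; this is false: writing $z=s+it$, the restriction is $(s,t)\mapsto -U(x_0+sx,\,y_0+tx)$, which is neither affine nor harmonic for the Burkholder function, and convexity in the real variable together with concavity in the imaginary variable only gives $\partial^2_{ss}(-U)\ge 0$ and $\partial^2_{tt}(-U)\le 0$, which says nothing about the sign of the Laplacian. The correct ingredient, and the one used in the paper, is the full zigzag-concavity of $U$ via the polarization identity
\[
\tfrac{\partial^2 U}{\partial(x,0)^2}+\tfrac{\partial^2 U}{\partial(0,x)^2}
=\tfrac12\Bigl(\tfrac{\partial^2 U}{\partial(x,x)^2}+\tfrac{\partial^2 U}{\partial(x,-x)^2}\Bigr)\le 0,
\]
i.e.\ concavity along the diagonal directions $(x,x)$ and $(x,-x)$ (the cases $\alpha=\pm 1$), not separate convexity/concavity in the two variables. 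Without this step your proof of $\chi_{p,X}\le\beta_{p,X}$ does not go through; once it is replaced by the zigzag-concavity computation (either fed into Proposition \ref{prop:chi<inftyiffexistsdiagplsfunc} or run directly through It\^o's formula, as the paper does), the rest of your outline is sound.
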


\begin{proof}
First we show \eqref{eq:uppandlowboundforchipX}, and then the
``iff'' statement will follow simultaneously. Let first show
$(iii)$ in \eqref{eq:uppandlowboundforchipX}. The fact that
$\chi_{p, X} \leq  \hbar_{p,X}$ follows from \cite{OY18}, the
definition of $\chi_{p,X}$, and the fact that any two stochastic
integrals $\int \Phi \ud W$ and $\int \Phi \ud \widetilde W$ are
orthogonal martingales weakly differentially subordinate to each
other. The inequality $\chi_{p, X} \leq\beta_{p, X}$ can be proven
using a standard Burkholder function argument e.g.\ presented in
\cite{Y17FourUMD,Y17MartDec}. Indeed, if $\beta_{p, X}<\infty$,
then $X$ is a UMD Banach space, and their exists a {\em
zigzag-concave} function $U:X\times X \to \mathbb R$ (i.e.\ $z
\mapsto U(x+z, y+\alpha z)$ is concave in $z\in X$ for any $x,
y\in X$ and $\alpha \in [-1,1]$) such that $U(0,0)=0$ and
\[
U(x, y) \geq \|y\|^p - \beta_{p, X}^p\|x\|^p,\;\;\; x,y\in X.
\]
(This function is called {\em Burkholder}.) By a standard
convolution-type argument (see Subsection \ref{subsec:funcapprox}) we may
assume that $U$ is twice differentiable, and hence for any
independent standard Brownian motions $W$ and $\widetilde W$ and
for any elementary predictable $\Phi:\mathbb R_+ \times \Omega \to
X$ by It\^o's formula \cite[Theorem 3.8]{Y17MartDec} we have that
analogously to \eqref{eq:proofthatchiusingito} with denoting $M :=
\int \Phi \ud W$ and $ N := \int \Phi \ud \widetilde W$
\begin{align*}
\mathbb E \Bigl\| \int_0^{\infty} \Phi \ud \widetilde W \Bigr\|^p
- \beta_{p,X}^p \mathbb E \Bigl\| \int_0^{\infty} \Phi \ud W
\Bigr\|^p &\leq U\Bigl( \int_0^{\infty} \Phi \ud W , \int_0^{\infty} \Phi \ud \widetilde W\Bigr)\\
& = \frac12\int_0^{\infty} \tfrac{\partial^2 U(M_t, N_t)}{\partial (\Phi, 0)^2} + \tfrac{\partial^2 U(M_t, N_t)}{\partial (0,\Phi)^2} \ud t \\
&= \frac14\int_0^{\infty} \tfrac{\partial^2 U(M_t, N_t)}{\partial
(\Phi, \Phi)^2} + \tfrac{\partial^2 U(M_t, N_t)}{\partial
(\Phi,-\Phi)^2} \ud t \leq 0,
\end{align*}
where the latter inequality holds due to the zigzag-concavity of $U$ (so both $\tfrac{\partial^2 U(x, y)}{\partial (z, z)^2}$ and $\tfrac{\partial^2 U(x, y)}{\partial (z, -z)^2}$ and nonnegative for any $x,y, z\in X$). Thus $\chi_{p, X} \leq\beta_{p, X}$ holds true.

Now $(ii)$ of \eqref{eq:uppandlowboundforchipX} follows directly from the definitions of $\chi_{p, X}$, $\beta_{p, X}^{\gamma,+}$, and $\beta_{p, X}^{\gamma,-}$, while $(i)$ holds by \cite[p.\ 43 and Theorem 3]{Gar85}.
\end{proof}

\begin{remark}\label{rem:-UandUHarediagplsh}
Note that due to the latter proof for a Burkholder function $U$
one has that $-U$ is diagonal plurisubharmonic. Thus the proof of
$(iii)$ of \eqref{eq:uppandlowboundforchipX} has the following
form: {\em both $-U$ and $U_{\mathcal H}$ are diagonally
plurisubharmonic and thus satisfy the conditions of Proposition
\ref{prop:chi<inftyiffexistsdiagplsfunc}$(ii)$}, so the upper bound $(iii)$ of \eqref{eq:uppandlowboundforchipX} holds true. 

We wish to notice that in the real-valued case functions $U^{SO}$ and $U_{\mathcal H}$ coincide since in this case there is no difference between plurisubharmonicity and diagonal  plurisubharmonicity. Nevertheless, if the same holds for a general UMD Banach space, then $\hbar_{p, X} = \chi_{p, X}\leq \beta_{p, X}$, which would partly solve an open problem outlined in the introduction.
\end{remark}

\section{Weak differential subordination\\ of strongly orthogonal martingales}\label{sec:WDSofSOM}

Now we are ready to show the main result of the paper.

\begin{theorem}\label{thm:WDSforSOM}
Let $X$ be a UMD Banach space, $1<p<\infty$. Then for any strongly
orthogonal martingales $M,N:\mathbb R_+ \times \Omega \to X$ with
$N\stackrel{w} {\ll} M$ one has that
\[
\mathbb E \|N_t\|^p \leq \chi_{p, X}^p \mathbb E \|M_t\|^p, \;\;\;
t\geq 0.
\]
\end{theorem}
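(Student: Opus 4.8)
The plan is to reduce Theorem~\ref{thm:WDSforSOM} to the existence of the diagonally plurisubharmonic function $U^{SO}$ produced by Proposition~\ref{prop:chi<inftyiffexistsdiagplsfunc} (via Corollary~\ref{cor:estimforchipX}, which guarantees $\chi_{p,X}<\infty$ since $X$ is UMD), and then run an It\^o-formula argument along the pair $(M,N)$. First I would invoke the function-approximation machinery of Subsection~\ref{subsec:funcapprox}: since the inequality to be proved is of the form $\mathbb E\|N_t\|^p \leq c_{p,X}^p\,\mathbb E\|M_t\|^p$, we may project onto finite-dimensional subspaces $Y_n^*$, so it suffices to treat $X$ finite-dimensional; and by the convolution argument we may assume $U^{SO}$ is $C^\infty$ (smoothing preserves diagonal plurisubharmonicity, convexity in the real variable and concavity in the imaginary variable, hence also the bound \eqref{eq:ubiggethenchipxp-yp} up to a vanishing error). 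A further reduction step I would include: replace $M,N$ by $M^{\tau_k}, N^{\tau_k}$ for a localizing/truncating sequence so that all the integrals below are genuine (not merely local) martingales and everything is integrable; Remark~\ref{rem:orth+wds} is not needed here since $N_0$ need not vanish, but strong orthogonality does force the continuous parts to interact trivially.

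The core computation is It\^o's formula \cite[Theorem 3.8]{Y17MartDec} applied to $t\mapsto U^{SO}(M_t+iN_t)$, exactly as in \eqref{eq:proofthatchiusingito}. Writing $M = M^c+M^d$, $N = N^c+N^d$ for the continuous/purely discontinuous decompositions (available since $X$ is UMD, see Subsection~\ref{subsec:qvandpdm}), the expansion has three pieces. The martingale terms $\mathbb E\int \langle \partial_x U^{SO}(M_{t-}+iN_t),\ud M_t\rangle + \mathbb E\int \langle \partial_{ix} U^{SO}(M_{t-}+iN_t),\ud N_t\rangle$ vanish: by local boundedness of the derivatives (after truncation) these are martingales started at $0$, as in \cite[proof of Theorem 3.18]{Y17FourUMD}. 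The quadratic-variation term splits into a continuous part $\frac12\mathbb E\int \sum_{n,m}\big(\partial^2_{x_nx_m}U^{SO} \,\ud[\langle M^c,x_n^*\rangle,\langle M^c,x_m^*\rangle]_t + \partial^2_{ix_nix_m}U^{SO}\,\ud[\langle N^c,x_n^*\rangle,\langle N^c,x_m^*\rangle]_t + (\text{mixed }[\langle M^c,x_n^*\rangle,\langle N^c,x_m^*\rangle]\text{ terms})\big)$, plus a sum over jumps $\mathbb E\sum_{s}\big(U^{SO}(M_s+iN_s) - U^{SO}(M_{s-}+iN_s) - \langle\partial_x U^{SO}(M_{s-}+iN_s),\Delta M_s\rangle\big) + (\text{analogous in }N)$. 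Here is where the three structural hypotheses get used: strong orthogonality kills \emph{all} mixed covariation terms $[\langle M,x^*\rangle,\langle N,y^*\rangle]$, so the continuous second-order part decouples into an ``$M$-Laplacian'' piece controlled by $\ud[\langle M^c,x^*\rangle]$ and an ``$N$-Laplacian'' piece; weak differential subordination then gives $\ud[\langle N^c,x^*\rangle]_t \leq \ud[\langle M^c,x^*\rangle]_t$, so after choosing at each $(t,\omega)$ a basis diagonalizing the relevant rank-one-type form (as in \cite[Lemma 3.7]{Y17MartDec}) the combined continuous term is bounded below by $\frac12\int \Delta_z U^{SO}(\cdots+zx)|_{z=0}\,\ud(\text{something nonnegative}) \geq 0$ by diagonal plurisubharmonicity — the key point being that diagonal plurisubharmonicity is exactly the statement that $\partial^2_{x_n}U^{SO} + \partial^2_{ix_n}U^{SO}\geq 0$ in any single real direction $x_n$, which is what survives after strong orthogonality eliminates cross terms. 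For the jump part one uses convexity of $x\mapsto U^{SO}(x+iy)$ to show the $M$-jump contributions are $\geq 0$, concavity of $y\mapsto U^{SO}(x+iy)$ together with $|\langle\Delta N_s,y^*\rangle|\leq|\langle\Delta M_s,y^*\rangle|$-type control from weak differential subordination (and strong orthogonality forcing jumps of $M$ and $N$ not to occur ``simultaneously in a coupled way'') to bound the $N$-jump contributions appropriately. Assembling, $\mathbb E\,U^{SO}(M_t+iN_t) \geq \mathbb E\,U^{SO}(M_0+iN_0) \geq 0$, and since $U^{SO}(x+iy)\leq \chi_{p,X}^p\|x\|^p - \|y\|^p$ we get $\mathbb E\|N_t\|^p \leq \chi_{p,X}^p\,\mathbb E\|M_t\|^p$; finally let the truncation/localization parameter go to infinity and use Fatou/monotone convergence.

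The main obstacle I expect is the careful bookkeeping of the jump terms of the purely discontinuous parts $M^d,N^d$. In the continuous case the It\^o expansion is clean and the inequality $\ud[\langle N^c,x^*\rangle]\leq\ud[\langle M^c,x^*\rangle]$ plugs directly into diagonal plurisubharmonicity; but with jumps one must verify that (i) the jump of $U^{SO}$ along $M$ minus its linearization is nonnegative using only real-convexity (fine, since $U^{SO}$ is convex in $x$), (ii) the jump of $U^{SO}$ along $N$ minus its linearization is $\leq$ the corresponding $M$-quantity — this needs concavity in $y$ \emph{plus} the pointwise domination of jumps coming from weak differential subordination, and crucially the fact that for strongly orthogonal $M,N$ a jump of $N$ at time $s$ is ``subordinate'' to a jump of $M$ at the same $s$ in the quadratic-variation sense, so one can pair them off. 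This is precisely the type of estimate handled in \cite{Y17MartDec,OY18} for plurisubharmonic Bellman functions, so I would cite and adapt those jump lemmas rather than redo them; the novelty is only that diagonal plurisubharmonicity (a weaker property than full plurisubharmonicity) still suffices here because strong orthogonality has already removed the off-diagonal second-order terms that genuine plurisubharmonicity would otherwise be needed to control.
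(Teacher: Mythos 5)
Your overall strategy (reduce to finite dimensions, smooth $U^{SO}$, apply It\^o's formula and use diagonal plurisubharmonicity, convexity in the real variable and concavity in the imaginary variable) is the same as the paper's, but there are two genuine gaps. The first comes from your decision to dismiss Remark~\ref{rem:orth+wds}. Strong orthogonality with $y^*=x^*$ gives ordinary orthogonality, so together with $N\stackrel{w}{\ll}M$ that remark yields $N_0=0$ a.s.\ and, crucially, that $N$ has continuous trajectories. You claim the remark is not needed and that $N_0$ need not vanish, yet your final chain ends with $\mathbb E\,U^{SO}(M_0+iN_0)\geq 0$, which is unjustified without $N_0=0$: the function $U^{SO}$ is only known to be nonnegative on the real subspace $X$, and by \eqref{eq:ubiggethenchipxp-yp} it must be negative at points $x+iy$ with $\|y\|$ large. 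Worse, by ignoring the continuity of $N$ you create a spurious problem of controlling jump terms of $N^d$, which you propose to handle by ``concavity in $y$ plus pointwise domination of jumps'' and a pairing of simultaneous jumps of $M$ and $N$. Concavity gives the wrong sign for the $N$-jump correction, and neither \cite{Y17MartDec} nor \cite{OY18} provides a jump lemma for simultaneous jumps under mere diagonal plurisubharmonicity; the paper's proof avoids this entirely because $N$ has no jumps, so the jump term $I_2$ involves only $\Delta M_s$ and convexity of $x\mapsto U^{SO}(x+iy)$ alone suffices.

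The second gap is in the continuous second-order term. Strong orthogonality does kill the mixed brackets, but the per-functional domination $\ud[\langle N,x^*\rangle]\leq \ud[\langle M,x^*\rangle]$ by itself does not let you match the real and imaginary second derivatives direction by direction, and diagonal plurisubharmonicity only controls $\partial^2_{x}U^{SO}+\partial^2_{ix}U^{SO}$ along one and the same real direction. The paper's route is: by \cite[Section 3]{OY18}, after a time change one writes $N=\int\Psi\ud W$ and $M=M_0+\int\Phi\ud W+M^d$ for a common $2d$-dimensional Brownian motion with $\Psi=\Phi A$, $A$ skew-symmetric of norm at most one; the two nonnegative bilinear forms $V(x^*,y^*)=\langle\Phi^*x^*,\Phi^*y^*\rangle$ and $\widetilde V(x^*,y^*)=\langle\Psi^*x^*,\Psi^*y^*\rangle$ then satisfy the kernel inclusion needed to diagonalize them simultaneously in one basis, with $\widetilde V$-eigenvalues $\lambda_n\in[0,1]$ relative to $V$, and only after using concavity of $y\mapsto U^{SO}(x+iy)$ to replace $\lambda_n$ by $1$ can one invoke diagonal plurisubharmonicity. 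Your sketch gestures at ``a basis diagonalizing the relevant rank-one-type form'', but the forms here are not rank one, the simultaneous diagonalization rests precisely on $\Psi=\Phi A$ (a consequence of orthogonality plus weak differential subordination, not of bracket domination alone), and the concavity in the imaginary variable is indispensable at this point, whereas you invoke it only in the jump step where it is not needed. Supplying these two ingredients, namely Remark~\ref{rem:orth+wds} and the $\Psi=\Phi A$ simultaneous-diagonalization step, essentially reconstructs the paper's proof.
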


\begin{proof}
By Subsection \ref{subsec:funcapprox} we may assume that $X$ is
finite dimensional and that all the Bellman functions are smooth.
Due to \eqref{eq:ubiggethenchipxp-yp} we only need to show that
\begin{equation}\label{eq:proofofmainthmBellmanuse}
\mathbb E U^{SO}(M_t + iN_t) \geq 0,
\end{equation}
where $U^{SO}$ is as in Remark \ref{rem:defofUSO}. Let
$d\geq 0$ be the dimension of $X$. Since $N\stackrel{w} {\ll} M$
and since $M$ and $N$ are orthogonal, by \cite[Section 3]{OY18} we know
that after a proper time-change there exist a standard
$2d$-dimensional Brownian motion $W$ and predictable $\Phi,
\Psi:\mathbb R_+ \times \Omega \to \mathcal L(\mathbb R^{2d}, X)$
which are stochastically integrable with respect to $W$ such that
$N = \int \Psi \ud W$ and $M = M_0 + \int \Phi \ud W + M^d$, where
$M^d$ is purely discontinuous (see Subsection \ref{subsec:qvandpdm}). Moreover, as $M$ and $N$ are
strongly orthogonal, we have that for any $x^*, y^*\in X^*$ and
$t\geq 0$ by \cite[Theorem 26.6 and 26.13]{Kal}
\[
[\langle M, x^* \rangle,\langle N, y^* \rangle]_t = \int_0^t \bigl \langle \Phi^*(s)x^*, \Psi^*(s)y^*\bigr \rangle \ud s = 0.
\]
Therefore by the Lebesgue differentiation theorem $\langle \Phi^*x^*, \Psi^*y^*\bigr \rangle = 0$ a.e.\ on $\mathbb R_+ \times \Omega$. By choosing $(x^*, y^*)$ from a dense subset of $X^* \times X^*$ and using the fact that $(x^*, y^*) \mapsto \langle \Phi^*x^*, \Psi^*y^*\bigr \rangle$ is continuous on $X^* \times X^*$ on the whole $\mathbb R_+ \times \Omega$, one has
\begin{equation}\label{eq:Phix*Psix*=0}
\langle \Phi^*x^*, \Psi^*y^*\bigr \rangle = 0,\;\;\; x^*, y^* \in X^*,
\end{equation}
a.e.\ on $\mathbb R_+ \times \Omega$. Furthermore, by
\cite[Section 3]{OY18} we have that a.s.\ for any $0\leq s\leq t$ there
exists a skew-symmetric operator $A(s, \omega)\in \mathcal
L(\mathbb R^d)$ (i.e.\ $\langle Ah , h\rangle = 0$ for any $h\in
\mathbb R^d$) of norm at most one such that
\begin{equation}\label{eq:existofA}
\Psi(s, \omega)  = \Phi(s,\omega) A (s,\omega).
\end{equation}

Now let us show \eqref{eq:proofofmainthmBellmanuse} using \eqref{eq:Phix*Psix*=0}.  Let $(x_n)_{n=1}^d$ be a basis of $X$, $(x_n^*)_{n=1}^d$ be the corresponding dual basis of $X^*$. By It\^o's formula \cite[Theorem 3.8]{Y17MartDec} and smoothness of $U^{SO}$ we have that
\begin{align*}
\mathbb E U^{SO}(M_t + iN_t) = \mathbb E U^{SO}(M_0 + iN_0) + \mathbb E I_1 + \mathbb E I_2 + \frac 12 \mathbb E I_3,
\end{align*}
where
$$
I_1 = \int_0^t \langle \partial U^{SO} (M_{s-} + iN_s), \ud M_s + i\ud N_s \rangle,
$$
$$
I_2 = \sum_{0\leq s \leq t} \Delta U^{SO} (M_s + iN_s)  - \langle \partial U^{SO} (M_{s-} + iN_s), \Delta M_s \rangle,
$$
and
\begin{align*}
I_3 &= \int_0^t \sum_{n, m=1}^d\tfrac{\partial^2 U^{SO} (M_{s-} + iN_s)}{\partial x_nx_m} \langle \Phi^*x_n^*,\Phi^* x_m^* \rangle\ud t\\
&\quad + 2\int_0^t \sum_{n, m=1}^d\tfrac{\partial^2 U^{SO} (M_{s-} + iN_s)}{\partial x_n ix_m} \langle \Phi^*x_n^*,\Psi^* x_m^* \rangle\ud t\\
&\quad\quad + \int_0^t \sum_{n, m=1}^d\tfrac{\partial^2 U^{SO} (M_{s-} + iN_s)}{\partial ix_n ix_m} \langle \Psi^*x_n^*,\Psi^* x_m^* \rangle\ud t.
\end{align*}
First notice that since $N_0=0$ and since $U^{SO}(x) \geq 0$ for any $x\in X$ we have that $\mathbb E U^{SO}(M_0 + iN_0) = \mathbb E U^{SO}(M_0) \geq 0$. Moreover, $\mathbb E I_1 = 0$ since this is a martingale that starts at zero (which follows similarly to the proof of Proposition \ref{prop:chi<inftyiffexistsdiagplsfunc}). Let us show that $I_2 \geq 0$ a.s. Note that $x \mapsto U^{SO}(x + iy)$ is convex in $x\in X$ for any $y\in X$ by Proposition \ref{prop:chi<inftyiffexistsdiagplsfunc}, so by the continuity of $N$ we have that for any $0\leq s \leq t$
\begin{align*}
 U^{SO} (M_s + iN_s)  \leq  U^{SO} (M_{s-} + iN_s)  +  \langle \partial U^{SO} (M_{s-} + iN_s), \Delta M_s \rangle,
\end{align*}
and thus $I_2 \geq 0$ a.s. 

Now we show that $I_3 \geq 0$ a.s. In order
to show this we need to prove that a.s.\ for every $0\leq s\leq t$
\begin{equation}\label{I3pointwisegeq0}
\begin{split}
\sum_{n, m=1}^d&\tfrac{\partial^2 U^{SO} (M_{s-} + iN_s)}{\partial
x_nx_m} \langle \Phi^*x_n^*,\Phi^* x_m^* \rangle\\
& \quad + \tfrac{\partial^2 U^{SO} (M_{s-} + iN_s)}{\partial x_n
ix_m}
\langle \Phi^*x_n^*,\Psi^* x_m^* \rangle\\
 &\quad \quad+ \tfrac{\partial^2
U^{SO} (M_{s-} + iN_s)}{\partial ix_n ix_m} \langle
\Psi^*x_n^*,\Psi^* x_m^* \rangle \geq 0
\end{split}
\end{equation}
Fix $\omega \in \Omega$ and $0\leq s\leq t$ so that
\eqref{eq:Phix*Psix*=0} and \eqref{eq:existofA} hold true. Then the
expression on the left-hand side of \eqref{I3pointwisegeq0} gets
the following form
\begin{equation}\label{eq:I3simplerform}
\sum_{n, m=1}^d\tfrac{\partial^2 U^{SO} (M_{s-} + iN_s)}{\partial
x_nx_m} \langle \Phi^*x_n^*,\Phi^* x_m^* \rangle +
\tfrac{\partial^2 U^{SO} (M_{s-} + iN_s)}{\partial ix_n ix_m}
\langle \Psi^*x_n^*,\Psi^* x_m^* \rangle.
\end{equation}
Now analogously to \cite[Section 3]{OY18} the expression
\eqref{eq:I3simplerform} does not depend on the choice of the
basis $(x_n)_{n=1}^d$ or, equivalently, the choice of the basis
$(x_n^*)_{n=1}^d$ (since one can reconstruct the basis by its
corresponding dual basis, see \cite{OY18,Y17MartDec}). Moreover, by
\eqref{eq:existofA} for two symmetric nonnegative bilinear forms
$V, W :X^*\times X^* \to \mathbb R$ defined~by
\[
V(x^*, y^*) := \langle \Phi^*x^*,\Phi^* y^* \rangle,\;\;  W(x^*,
y^*) := \langle \Psi^*x^*,\Psi^* y^* \rangle,\;\;\;\;\; x^*,
y^*\in X^*,
\]
we have that $V(x^*, x^*) = 0$ implies $W(x^*, x^*) =0$ for any
$x^*\in X^*$. Thus  by \cite[Section 3]{OY18} there exist a basis
$(y_n^*)_{n=1}^d$ of $X^*$ with the corresponding dual basis
$(y_n)_{n=1}^d$ of $X$, a $[0,1]$-valued sequence
$(\lambda_n)_{n=1}^d$, and a number $0 \leq K \leq d$ such that
$V(y_n^*, y_m^*) = \delta_{nm} \mathbf 1_{m,n \leq K}$ and
$W(y_n^*, y_m^*) = \lambda_n\delta_{nm} \mathbf 1_{m,n \leq K}$
for any $m,n=1,\ldots,d$. Therefore by the discussion above we can
change the basis and get that the expression
\eqref{eq:I3simplerform} equals
\begin{equation}\label{eq:I3withnewbasis}
\begin{split}
\sum_{n, m=1}^d\tfrac{\partial^2 U^{SO} (M_{s-} + iN_s)}{\partial
y_ny_m}& \langle \Phi^*y_n^*,\Phi^* y_m^* \rangle +
\tfrac{\partial^2 U^{SO} (M_{s-} + iN_s)}{\partial iy_n iy_m}
\langle \Psi^*y_n^*,\Psi^* y_m^* \rangle\\
&= \sum_{n=1}^K\tfrac{\partial^2 U^{SO} (M_{s-} + iN_s)}{\partial
y_n^2}  + \lambda_n\tfrac{\partial^2 U^{SO} (M_{s-} +
iN_s)}{\partial iy_n^2}.
\end{split}
\end{equation}
Since $y\mapsto U^{SO}(x+iy)$ is concave in $y\in X$ for any $x\in
X$, $\tfrac{\partial^2 U^{SO} (M_{s-} + iN_s)}{\partial iy_n^2}
\leq 0$, and hence due to the fact that $ 0\leq \lambda_n \leq 1$
we have that the latter expression of \eqref{eq:I3withnewbasis} is
bounded from below by (here $z\in \mathbb C$)
\begin{align*}
\sum_{n=1}^K\tfrac{\partial^2 U^{SO} (M_{s-} + iN_s)}{\partial
y_n^2}  + \tfrac{\partial^2 U^{SO} (M_{s-} + iN_s)}{\partial iy_n^2} = \sum_{n=1}^K \Delta_z U^{SO} (M_{s-} + iN_s + zy_n)|_{z=0}
\geq 0,
\end{align*}
where the latter holds by the diagonal plurisubharmonicity of
$U^{SO}$. Therefore \eqref{I3pointwisegeq0} holds a.e.\ on
$\mathbb R_+ \times \Omega$, and thus $\mathbb EI_3 \geq 0$. This
completes the proof of \eqref{eq:proofofmainthmBellmanuse} and the
proof of the theorem.
\end{proof}

\bibliographystyle{plain}

\def\cprime{$'$} \def\polhk#1{\setbox0=\hbox{#1}{\ooalign{\hidewidth
  \lower1.5ex\hbox{`}\hidewidth\crcr\unhbox0}}}
  \def\polhk#1{\setbox0=\hbox{#1}{\ooalign{\hidewidth
  \lower1.5ex\hbox{`}\hidewidth\crcr\unhbox0}}} \def\cprime{$'$}

\end{document}